\documentclass[11pt,a4paper]{article}
\usepackage{amsmath,amssymb,amsthm,url}
\usepackage{amsfonts,enumerate}
\usepackage{graphicx}

\usepackage[usenames,dvipsnames]{color}

\input epsf
\input{xy}\xyoption{all}

\usepackage[a4paper,left=2.5cm,right=2.5cm,top=2.5cm,bottom=2.5cm]{geometry}

\newcommand{\jonly}[1]{}
\newcommand{\aronly}[1]{#1}

\def\Z{{\mathbb Z}} \def\R{{\mathbb R}}  
\long\def\comment#1\endcomment{}

\def\sign{\mathop{\fam0 sign}}

\def\diag{\delta}







\theoremstyle{plain}
\newtheorem{theorem}{Theorem}[section]
\newtheorem{lemma}[theorem]{Lemma}
\newtheorem{corollary}[theorem]{Corollary}
\newtheorem{proposition}[theorem]{Proposition}

\theoremstyle{definition}
\newtheorem{remark}[theorem]{Remark}

\begin{document}

\newpage
\title{Stronger counterexamples to the topological Tverberg conjecture}

\author{S. Avvakumov\footnote{University of Copenhagen. Email: \texttt{savvakumov@gmail.com}. Supported by the Austrian Science Fund (FWF), Project P31312-N35 and the European Research Council under the European Union's Seventh Framework Programme ERC Grant agreement ERC StG 716424 -- CASe.},
R. Karasev\footnote{Institute for Information Transmission Problems.
Email: \texttt{r\_n\_karasev@mail.ru}. \texttt{http://www.rkarasev.ru/en/about/}.}
and A. Skopenkov\footnote{Independent University of Moscow, and Moscow Institute of Physics and Technology, and
Email: \texttt{skopenko@mccme.ru}. \texttt{https://users.mccme.ru/skopenko/}.}}

\date{}

\maketitle

\begin{abstract} Denote by $\Delta_M$ the $M$-dimensional simplex.
A map  $f\colon \Delta_M\to\R^d$ is an {\it almost $r$-embedding} if
$f(\sigma_1)\cap\ldots\cap f(\sigma_r)=\emptyset$ whenever $\sigma_1,\ldots,\sigma_r$ are pairwise disjoint faces.
A counterexample to the topological Tverberg conjecture asserts that {\it if $r$ is not a prime power and $d\ge2r+1$, then there is an almost $r$-embedding $\Delta_{(d+1)(r-1)}\to\R^d$}.
This was improved by Blagojevi\'c--Frick--Ziegler using a simple construction of higher-dimensional counterexamples by taking $k$-fold join power of lower-dimensional ones.
We improve this further (for $d$ large compared to $r$): {\it If $r$ is not a prime power and $N=(d+1)r-r\Big\lceil\dfrac{d+2}{r+1}\Big\rceil-2$, then there is an almost $r$-embedding $\Delta_N\to\R^d$}.
The improvement follows from our stronger counterexamples to the $r$-fold van Kampen--Flores conjecture.
Our proof is based on generalizations of the Mabillard--Wagner theorem on construction of almost $r$-embeddings from equivariant maps, and of the \"Ozaydin theorem on existence of equivariant maps.
\end{abstract}

\noindent
{\em MSC 2010}: 52C35, 55S91, 57S17.

\noindent
{\em Keywords:} The topological Tverberg conjecture, multiple points of maps, equivariant maps, deleted product obstruction.

\tableofcontents

\section{Introduction and statement of results}\label{s:mr}

Denote by $\Delta_M$ the $M$-dimensional simplex.
We omit `continuous' for maps.
A map  $f\colon K\to\R^d$ of a union $K$ of closed faces of $\Delta_M$ is an {\bf almost $r$-embedding} if $f(\sigma_1)\cap\ldots\cap f(\sigma_r)=\emptyset$ whenever $\sigma_1,\ldots,\sigma_r$ are pairwise disjoint faces of $K$.
We omit `for any integers $d,r>0$ and $k\ge 0$' at the beginnings of statements.

\medskip
\noindent
{\bf Theorem \ref{t:tvestr}'.}  {\it If $r$ is not a prime power and $d\ge3r+1$, then there is an almost $r$-embedding $\Delta_{(d+1)(r-1)}\to\R^d$.}

\medskip
This is a counterexample to the celebrated topological Tverberg conjecture.
Theorem \ref{t:tvestr}' follows from Theorem \ref{c:any}' of \"Ozaydin and Mabillard--Wagner, together with Lemma \ref{p:redu-ge}' of Gromov--Blagojevi\'c--Frick--Ziegler.
For the history, see the surveys \cite{BBZ, Sk16, BZ16, BS17, Sh18} and the references therein.
In \cite{AMSW} Theorem \ref{t:tvestr}' was improved to $d\ge2r+1$.

The following result gives stronger counterexamples to the topological Tverberg conjecture.

\begin{theorem}\label{t:tvestr} If $r$ is not a prime power and $$N=N(d,r):=(d+1)r-r\Big\lceil\dfrac{d+2}{r+1}\Big\rceil-2,$$
then there is an almost $r$-embedding $\Delta_N\to\R^d$.
\end{theorem}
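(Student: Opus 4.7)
The plan is to adapt the \"Ozaydin--Mabillard--Wagner template that produced the original counterexample (with $N=(d+1)(r-1)$), using the two sharpened ingredients announced in the abstract. Set $k:=\lceil(d+2)/(r+1)\rceil$, so that the target numerical identity reads $N+2=r(d+1-k)$.

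\emph{Step 1 (reduction to equivariant existence).} I first invoke the generalized Mabillard--Wagner theorem of the paper. The classical version converts an $S_r$-equivariant map $(\Delta_N)^r_\Delta\to S^{d(r-1)-1}$ into an almost $r$-embedding $\Delta_N\to\R^d$, but only under a codimension hypothesis that fails once $N$ exceeds roughly $(d+1)(r-1)$. The generalization exploits the following basic observation: for a generic map $f\colon\Delta_N\to\R^d$ and pairwise disjoint faces $\sigma_1,\dots,\sigma_r$ with $\sum_i\dim\sigma_i<(r-1)d$, the $r$-fold intersection $f\sigma_1\cap\cdots\cap f\sigma_r$ is empty on dimensional grounds. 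Hence equivariant control is only needed on the \emph{critical} subcomplex of the deleted product (or deleted join) consisting of $r$-tuples with $\sum_i\dim\sigma_i\ge(r-1)d$, and it suffices to construct an $S_r$-equivariant map from this critical subcomplex into the free $S_r$-sphere $S^{d(r-1)-1}$.

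\emph{Step 2 (construction of the equivariant map).} Because $r$ is not a prime power, I apply a generalized \"Ozaydin theorem: primary and higher obstructions to $S_r$-equivariant maps into free $S_r$-spheres vanish, as in \"Ozaydin's original transfer argument. A dimension count based on $N=r(d+1-k)-2$ shows that the critical subcomplex from Step~1 fits the hypotheses of the generalized \"Ozaydin theorem; the required equivariant map is then produced by iterated obstruction-theoretic extension.

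\emph{Main obstacle.} The principal difficulty is Step~1: formulating and proving a Mabillard--Wagner-type realization theorem that produces an almost $r$-embedding from an equivariant map living only on the critical subcomplex. The classical local piping and cancellation techniques must be adapted so that the equivariantly-controlled cancellations on critical cells are not spoiled by the generic-position perturbations needed on the remaining, non-critical cells; careful bookkeeping of face dimensions via the identity $N+2=r(d+1-k)$ is exactly what keeps the two mechanisms compatible. Once this generalized realization theorem is in place, Step~2 is a technical but essentially calculational extension of \"Ozaydin's argument.
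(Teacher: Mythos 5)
Your proposal takes a genuinely different route from the paper, and it contains a real gap rather than just an unfinished detail. You work directly with the deleted product of $\Delta_N$ and propose (Step 1) a version of Mabillard--Wagner that would turn an equivariant map defined \emph{only on a ``critical subcomplex''} of high-total-dimension cells into an almost $r$-embedding, flagging this as the ``main obstacle'' without resolving it. This is not the generalization the paper uses, and I do not believe it holds as stated: the Whitney-trick-for-$r$-fold-points argument needs the equivariant map on the whole deleted product (or at least a full skeleton), since the cancellation framings on high cells are tied by obstruction theory to the choices on their faces, and a generic map on the ``non-critical'' part cannot be matched a posteriori to independently chosen equivariant data on the critical part. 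Your Step 2 also differs from the paper and is asserted rather than proved: \"Ozaydin's transfer argument kills the \emph{primary} obstruction, but nothing in it automatically kills the higher obstructions you would need once the domain exceeds $d(r-1)$.

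The paper sidesteps both difficulties. It first applies a Constraint Lemma (a generalization of Gromov--Blagojevi\'c--Frick--Ziegler) to reduce the problem to constructing an almost $r$-embedding of the $k$-skeleton $\Delta_N^{(k)}$ in $\R^{d-1}$, where $k=d-1-\lceil(d+2)/(r+1)\rceil$ and $N=(k+2)r-2$ (note your $k$ is the complementary quantity $\lceil(d+2)/(r+1)\rceil$). Because $K=\Delta_N^{(k)}$ is now a $k$-complex with $r(d-1)\ge(r+1)k+3$, the \emph{already known} metastable Mabillard--Wagner theorem applies with the full deleted product $K^{\times r}_\Delta$ and the full sphere; no ``critical subcomplex'' variant is needed. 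The genuinely new ingredient is the generalized \"Ozaydin theorem: for $r$ not a prime power, \emph{any} free PL $\Sigma_r$-complex admits a $\Sigma_r$-equivariant map to $\R^{2\times r}$ minus the diagonal (a free $(2r-3)$-sphere), with no dimension restriction. This is proved not by a transfer computation of higher obstructions but by exhibiting a degree-zero $\Sigma_r$-equivariant self-map of $S^{2r-3}_{\Sigma_r}$ (built from the vanishing of $\gcd\binom{r}{k}$ for $r$ not a prime power) and then invoking the fact that existence of a degree-zero equivariant self-map forces every free $G$-complex to map in equivariantly. Composing this low-dimensional equivariant map with the inclusion $\R^2\hookrightarrow\R^{d-1}$ supplies the input to metastable Mabillard--Wagner. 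If you want to salvage your plan, the decisive move you are missing is the skeleton reduction via the Constraint Lemma, which converts the ``need equivariant control only where it matters'' intuition into a statement about an honest subcomplex of the simplex rather than a subcomplex of the deleted product.
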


Theorem \ref{t:tvestr} follows from Theorem \ref{c:any} and Lemma \ref{p:redu-ge}, see the details in \S\ref{s:ded}.

\begin{remark}[motivation and related work]\label{c:motiv}
(a) There naturally appears more general problem:
{\it For which $a,d$ there is an almost $r$-embedding $\Delta_a\to\R^d$?}

This problem was considered in \cite[\S5]{BFZ}, where higher-dimensional counterexamples were constructed from  lower-dimensional ones: {\it If there is an almost $r$-embedding $\Delta_a\to\R^d$, then for each $k$ there is an almost $r$-embedding $\Delta_{k(a+1)-1}\to\R^{k(d+1)-1}$} \cite[Lemma 5.2]{BFZ}.
The proof (exposed a bit simpler \cite[Remark 1.5.c]{Sk16}) is by taking $k$-fold join power as follows.
For two maps $f:\Delta_a\to\R^p$ and $g:\Delta_b\to\R^q$ define the {\it join}
$f*g:\Delta_{a+b+1}=\Delta_a*\Delta_b \to \R^p *\R^q \subset\R^{p+q+1}$ by the formula
$$
(f*g)(\lambda x \oplus (1-\lambda) y):= \lambda f(x)\oplus (1-\lambda) f(y),\quad\text{where}\quad \lambda\in[0,1].
$$
A join of almost $r$-embeddings is an almost $r$-embedding.
Hence the $k$-fold join power of an almost $r$-embedding $\Delta_a\to\R^d$ is an almost $r$-embedding $\Delta_{k(a+1)-1}\to\R^{k(d+1)-1}$.


According to a private communication by F. Frick this procedure \cite[Theorem 5.4]{BFZ} together with the counterexample in \cite[Theorem 1.1]{AMSW} gives an almost $r$-embedding $\Delta_F\to\R^d$ for $r$ not a prime power, $d$ sufficiently large, and $F$ some integer close to $(d+1)r - \dfrac{r+\frac12}{r+1}(d+1)$.
Presumably $F-(d+1)(r-1)$ can be arbitrarily large.

Theorem \ref{t:tvestr} provides even stronger counterexamples to the topological Tverberg conjecture:
{\it for $d$ large compared to $r$ we have $N>(d+1)(r-1)$, and even $N>F$}.
Theorem \ref{t:tvestr} is a partial result on \cite[Conjecture 5.5]{BFZ} stating that
{\it for $r<d$ not a prime power there is an almost $r$-embedding $\Delta_{(d+1)r-2}\to\R^d$
and there are no almost $r$-embeddings $\Delta_{(d+1)r-1}\to\R^d$.}
(The case $r\ge d$ of the conjecture is trivially covered by known results.)
Observe that $N\le dr-2$ for $r<d$.
The second part of the conjecture is addressed in \cite{FS20}.

(b) We think counterexamples of Theorem \ref{t:tvestr} are mostly interesting because their proof requires non-trivial ideas, see Theorems \ref{c:any}, \ref{t:ozaydin-g}, and Lemma \ref{l:zero-map} below.
Thus we do not spell out even stronger counterexamples which presumably could be obtained by combining Theorem \ref{t:tvestr} with the procedure of \cite[\S5]{BFZ} described in (a).
Our proof of Theorem \ref{t:tvestr} is independent of Theorem \ref{t:tvestr}, of \cite{AMSW}, and of the iterated join construction described in (a).

(c) Let us illustrate Theorem \ref{t:tvestr} by numerical examples.
Earlier results gave almost 6-embeddings $\Delta_{280}\to\R^{55}$ and $\Delta_{275}\to\R^{54}$, and, more generally, almost $r$-embeddings $\Delta_{(d+1)(r-1)}\to\R^d$ for $d\ge2r+1$,
\ $\Delta_{d(r-1)}\to\R^{d-1}$ for $d\ge2r+2$, and, even more generally, almost $r$-embeddings  $\Delta_{(d+1-s)(r-1)}\to\R^{d-s}$ for $d\ge2r+s+1$.
Corollary \ref{c:tvestr} below gives an almost 6-embedding $\Delta_{280}\to\R^{54}$, and, more generally, almost $r$-embeddings $\Delta_{(d+1)(r-1)}\to\R^{d-s}$ for certain $r,d,s$.
\end{remark}



\begin{corollary}\label{c:tvestr} Assume that $r$ is not a prime power.

(a) For $q\ge r+2$ and $d=(r+1)q-1$ there is an almost $r$-embedding $\Delta_{(d+1)(r-1)}\to\R^{d-1}$.

(b) If $d\ge(s+2)r^2$ for some integer $s$, then there is an almost $r$-embedding $\Delta_{(d+1)(r-1)}\to\R^{d-s}$.
\end{corollary}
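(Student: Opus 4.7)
The plan is to deduce both parts directly from Theorem \ref{t:tvestr} by a purely arithmetic comparison. Theorem \ref{t:tvestr} produces an almost $r$-embedding $\Delta_{N(d')}\to\R^{d'}$ for each target dimension $d'$, where $N(d'):=(d'+1)r-r\lceil(d'+2)/(r+1)\rceil-2$. Restricting such a map to a face-subcomplex preserves the almost $r$-embedding property, so in each part it suffices to apply Theorem \ref{t:tvestr} with $d'$ equal to the target Euclidean dimension ($d-1$ for (a), $d-s$ for (b)) and to verify the inequality $N(d')\ge(d+1)(r-1)$; the desired map is then the restriction of $\Delta_{N(d')}\to\R^{d'}$ to a copy of $\Delta_{(d+1)(r-1)}$ inside $\Delta_{N(d')}$.

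For (a), the choice $d=(r+1)q-1$ makes $((d-1)+2)/(r+1)=(d+1)/(r+1)=q$ an integer, so the ceiling is sharp. A direct expansion gives $N(d-1)=qr^2-r-2$, while $(d+1)(r-1)=qr^2-q$, so the required inequality $N(d-1)\ge(d+1)(r-1)$ reduces to $q\ge r+2$, which is precisely the hypothesis.

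For (b), set $d'=d-s$ and apply the elementary bound $\lceil(d-s+2)/(r+1)\rceil\le(d-s+r+2)/(r+1)$. After clearing denominators this yields $N(d-s)\ge\bigl((d-s)r^2-r\bigr)/(r+1)-2$, and the desired inequality $N(d-s)\ge(d+1)(r-1)$ reduces, after multiplying by $r+1$ and rearranging, to the sufficient condition $d\ge(s+1)r^2+3r+1$. Since $r$ is not a prime power we have $r\ge6$, so $r^2\ge3r+1$, and the hypothesis $d\ge(s+2)r^2=(s+1)r^2+r^2$ comfortably implies this bound.

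No topological input beyond Theorem \ref{t:tvestr} is needed; the only delicate point is handling the ceiling in $N(d')$, which is exact in (a) (giving the tight hypothesis $q\ge r+2$) and incurs a controlled loss of at most $r/(r+1)$ in (b). The ``main obstacle,'' such as it is, is purely bookkeeping --- matching the combinatorial bound of Theorem \ref{t:tvestr} against $(d+1)(r-1)$ for the appropriately shifted target dimensions.
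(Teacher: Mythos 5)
Your proof is correct and follows essentially the same route as the paper's: apply Theorem~\ref{t:tvestr} with target dimension $d'=d-1$ (resp.\ $d'=d-s$) and verify $N(d')\ge(d+1)(r-1)$, then restrict to a face. Both your arithmetic checks are right --- in (a) the threshold $q\ge r+2$ is exact, and in (b) clearing the ceiling via $\lceil(d-s+2)/(r+1)\rceil\le(d-s+2+r)/(r+1)$ reduces the claim to $d\ge(s+1)r^2+3r+1$, which indeed follows from $d\ge(s+2)r^2$ once $r\ge4$ (and $r\ge6$ here since $r$ is not a prime power). In fact your bound is more careful than the intermediate inequality $d\ge(s+1)r^2+r-1$ displayed in the paper, which appears to be a typo: that bound is not by itself sufficient for the first inequality in the paper's displayed chain (e.g.\ $r=6$, $s=0$, $d=41$ fails it), though the stated hypothesis $d\ge(s+2)r^2$ is amply sufficient either way. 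One small point of presentation: you spell out the restriction of an almost $r$-embedding to a subcomplex, which the paper leaves implicit; this is a harmless and correct observation.
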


\begin{proof}
Part (a) follows by Theorem \ref{t:tvestr} because $q\ge r+2$, so $((r+1)q-1)r-rq-2\ge(r+1)q(r-1)$.
Part (b) follows by Theorem \ref{t:tvestr} because $d\ge(s+2)r^2\ge(s+1)r^2+r-1$, hence
$$(d+1)(r-1)\le (d-s+1)r-r\dfrac{d-s+2+r}{r+1}-2 \le(d-s+1)r-r\Big\lceil\dfrac{d-s+2}{r+1}\Big\rceil-2.$$
\end{proof}


A {\bf complex} is a collection of closed faces (=simplices) of some simplex.
A {\it $k$-complex} is a complex containing at most $k$-dimensional simplices.
The {\it body} (or geometric realization) $|K|$ of a complex $K$ is the union of simplices of $K$.
Thus continuous or piecewise-linear (PL) maps $|K|\to\R^d$ and continuous maps $|K|\to S^m$ are defined.
We abbreviate $|K|$ to $K$; no confusion should arise.

By general position, any $k$-complex admits an almost $r$-embedding in $\R^{k+\big\lceil\tfrac{k+1}{r-1}\big\rceil}$.
The following counterexample to the $r$-fold van Kampen--Flores conjecture allows to sometimes decrease $\big\lceil\tfrac{k+1}{r-1}\big\rceil$ to $\tfrac{k}{r-1}$ (indeed, take $k=sr$).

\medskip
\noindent
{\bf Theorem \ref{c:any}'.} (\"Ozaydin and Mabillard-Wagner) {\it If $s\ge3$ and $r$ is not a prime power,
then any $s(r-1)$-complex admits an almost $r$-embedding in $\R^{sr}$.}


\medskip
This result follows from the \"Ozaydin
and the Mabillard--Wagner Theorems \ref{t:ozaydin-g}' and \ref{t:mmw}'.
See \cite[\S1, Motivation \& Future Work, 2nd paragraph]{MW14} or the survey
\cite[Theorems 1.7, 3.1 and 3.2, and Remark 1.9.b]{Sk16}.

The following result gives stronger counterexamples to the $r$-fold van Kampen--Flores conjecture.

\begin{theorem}\label{c:any} If $r$ is not a prime power,
then any $k$-complex admits an almost $r$-embedding in $\R^{k+\big\lceil\tfrac{k+3}r\big\rceil}$.
\end{theorem}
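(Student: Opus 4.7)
My plan is to follow the two-step strategy forecast in the abstract: use a generalization of Mabillard--Wagner to reduce the problem to an equivariant-map question, and then apply a generalization of \"Ozaydin to produce the map. First I would observe that it suffices to treat $K=\Delta_N^{(k)}$ for $N$ sufficiently large, since every $k$-complex is a subcomplex of such $\Delta_N^{(k)}$ and the restriction of an almost $r$-embedding is again an almost $r$-embedding. I then set $d:=k+\lceil(k+3)/r\rceil$, which is the smallest integer satisfying $r(d-k)\ge k+3$ (equivalently $rd\ge(r+1)k+3$).

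\paragraph{Step 1: reduction to equivariance.}
I would invoke a generalization of the Mabillard--Wagner theorem asserting that, under the codimension hypothesis $r(d-k)\ge k+3$, an almost $r$-embedding $K\to\R^d$ exists as soon as there is an $S_r$-equivariant map from the $r$-fold deleted product (or deleted join) of $K$ into the corresponding $S_r$-representation sphere. Specializing to $r=2$, the codimension inequality becomes $2(d-k)\ge k+3$, the classical Haefliger--Weber metastable range, so what is needed is precisely the extension of Haefliger--Weber-type realization from ordinary embeddings to $r$-fold almost embeddings. I would assume this generalization has been supplied earlier in the paper.

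\paragraph{Step 2: producing the equivariant map.}
Next I would invoke a generalization of \"Ozaydin's theorem for $r$ not a prime power to construct the required $S_r$-equivariant map from $(\Delta_N^{(k)})^{\times r}_\Delta$ to the target sphere $S^{d(r-1)-1}$. Heuristically the argument uses transfer over subgroups corresponding to the distinct prime divisors of $r$: since no single prime dominates $|S_r|$, the primary obstructions to an inductive extension over the skeleta cancel against one another.

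\paragraph{Main obstacle.}
The hard part will be Step 2. The naive \"Ozaydin argument only succeeds when the source dimension is at most the target dimension, but here $\dim(\Delta_N^{(k)})^{\times r}_\Delta=rk$ exceeds $\dim S^{d(r-1)-1}=d(r-1)-1$ by roughly $k/r$ as soon as $k\ge 3(r-1)$. The generalization must therefore exploit the special cohomological structure of the deleted product of a simplicial skeleton, for instance by showing that the equivariant obstructions in this range are torsion of orders whose prime factors divide $r$ and hence vanish whenever $r$ is not a prime power. Crucially, the ranges of applicability of the two generalized theorems must dovetail exactly at codimension $\lceil(k+3)/r\rceil$, so making the \"Ozaydin extension cover precisely the Mabillard--Wagner range is where the technical content of the argument lies.
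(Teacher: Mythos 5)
Your Step~1 and the dimension arithmetic ($d:=k+\lceil(k+3)/r\rceil$ is the least $d$ with $rd\ge(r+1)k+3$, which is exactly the hypothesis of Theorem~\ref{t:mmw}) match the paper. But your guess about what drives Step~2 is off in a way that matters, because the difficulty you correctly flag --- the source dimension $rk$ can exceed the target dimension $d(r-1)-1$ by an amount growing like $k/r$ --- is not resolved by the mechanism you speculate.

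You conjecture that the generalized \"Ozaydin theorem ``must exploit the special cohomological structure of the deleted product of a simplicial skeleton,'' e.g.\ that the relevant equivariant obstructions are torsion of order dividing $r$. This is not what Theorem~\ref{t:ozaydin-g} does, and indeed that theorem imposes \emph{no} hypothesis at all on $\dim X$ or on the combinatorial structure of $X$: it asserts that \emph{any} complex $X$ with a free PL $\Sigma_r$-action admits a $\Sigma_r$-equivariant map into the fixed, very low-dimensional target $\R^{2\times r}-\diag_r\simeq S^{2(r-1)-1}_{\Sigma_r}$. (One then composes with the inclusion $\R^2\hookrightarrow\R^d$ to reach $\R^{d\times r}-\diag_r$; you instead proposed mapping directly to $S^{d(r-1)-1}$.) The reason no dimension bound is needed is that the mechanism is not an obstruction-by-obstruction cancellation but the \emph{zero-degree self-map trick}: Lemma~\ref{l:zero-map} produces a degree-zero $\Sigma_r$-equivariant self-map of $S^{2r-3}_{\Sigma_r}$ (this is where the binomial-coefficient argument and the fact that $\gcd\binom{r}{k}=1$ for $r$ not a prime power enter), and Lemma~\ref{lemma:zero-degree} then shows that the existence of such a self-map already yields equivariant maps from \emph{every} free $G$-complex into that sphere, regardless of dimension. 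This is explicitly different from the transfer/Sylow-subgroup argument in \"Ozaydin's original proof (the paper says so in Remark~\ref{c:motivkf}), and it is exactly the idea missing from your plan. There is also no ``dovetailing'' constraint on Step~2 of the kind you anticipate; the only numerical condition in the whole argument is the Mabillard--Wagner codimension inequality checked in Step~1.
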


Theorem \ref{c:any} is easily deduced below from Theorems \ref{t:ozaydin-g} and \ref{t:mmw}.
The main new ingredient in the proof of Theorems \ref{t:tvestr} and \ref{c:any} is the following Theorem \ref{t:ozaydin-g}.

\smallskip
{\bf Acknowledgments.}
We are grateful to M. Berezovik, F. Frick, A. Magazinov, and the anonymous referees for helpful suggestions.

\section{Deduction of Theorems \ref{t:tvestr} and \ref{c:any} from Theorem \ref{t:ozaydin-g}}\label{s:ded}

\noindent
{\bf Lemma \ref{p:redu-ge}'.} (Constraint) {\it For $M=(sr+2)(r-1)$ if there is an almost $r$-embedding of the union
of $s(r-1)$-faces of $\Delta_M$ in $\R^{sr}$, then there is an almost $r$-embedding $\Delta_M\to\R^{sr+1}$.}

\medskip
Lemma \ref{p:redu-ge}' is due to Gromov \cite[2.9.c]{Gr10} and Blagojevi\'c--Frick--Ziegler \cite[Lemma 4.1.iii and 4.2]{BFZ14}, \cite[proof of Theorem 4]{Fr15'}.
Lemma \ref{p:redu-ge}' has a simple proof (see e.g. the survey \cite[Lemma 1.8]{Sk16}).
The proof  shows that

$\bullet$ $\R^{sr}$ and $\R^{sr+1}$ can be replaced by $\R^{d-1}$ and $\R^d$, respectively.

$\bullet$ $s(r-1)$ and $(sr+2)(r-1)$ can be replaced by $k$ and $(k+2)r-2$, respectively.


\begin{lemma}[Constraint]\label{p:redu-ge} For $M=(k+2)r-2$  if there is an almost $r$-embedding of the union
of $k$-faces of $\Delta_M$ in $\R^{d-1}$, then there is an almost $r$-embedding $\Delta_M\to\R^d$.
\end{lemma}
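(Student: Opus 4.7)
I would use the standard constraint trick: extend $f$ to all of $\Delta_N$ by appending one extra coordinate that vanishes exactly on the $k$-skeleton. Concretely, let $\tilde f\colon\Delta_N\to\R^{d-1}$ be any continuous extension of the given almost $r$-embedding $f\colon\Delta_N^{(k)}\to\R^{d-1}$, and let $\phi\colon\Delta_N\to[0,\infty)$ be a continuous function with $\phi^{-1}(0)=\Delta_N^{(k)}$ (for instance the Euclidean distance from $x$ to $\Delta_N^{(k)}$). Define
\[
F\colon\Delta_N\to\R^d,\qquad F(x):=(\tilde f(x),\,\phi(x)).
\]
The claim is that $F$ is an almost $r$-embedding.

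\textbf{Numerical input.} The hypothesis $N=(k+2)r-2$ rewrites as $N+1=r(k+2)-1$. Therefore any collection $\sigma_1,\ldots,\sigma_r$ of pairwise disjoint faces of $\Delta_N$ must contain at least one face of dimension $\le k$: if all had dimension $\ge k+1$ they would use at least $r(k+2)=N+2$ vertices, exceeding the $N+1$ vertices of $\Delta_N$.

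\textbf{Verification.} Suppose, for contradiction, that $y\in F\sigma_1\cap\cdots\cap F\sigma_r$ for pairwise disjoint faces $\sigma_i$. Write $y=(y',t)$ with $y'\in\R^{d-1}$ and $t\ge 0$, and pick $x_i\in\sigma_i$ with $F(x_i)=y$. I would split into two cases on $t$. If $t>0$, then $\phi(x_i)=t>0$, so $x_i\notin\Delta_N^{(k)}$; hence each $\sigma_i$ has a point off the $k$-skeleton and therefore $\dim\sigma_i\ge k+1$, directly contradicting the numerical input. If $t=0$, then $\phi(x_i)=0$, so each $x_i$ lies in $\Delta_N^{(k)}$; letting $\tau_i\subseteq\sigma_i$ be the unique face containing $x_i$ in its relative interior, the $\tau_i$ are pairwise disjoint faces of $\Delta_N^{(k)}$, and $f(\tau_i)\ni\tilde f(x_i)=y'$ for every $i$, contradicting the almost $r$-embedding property of $f$.

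\textbf{Main obstacle.} There is no substantive obstacle; the argument reduces entirely to the pigeonhole count on vertices of the $\sigma_i$, which is exactly tuned to $N=(k+2)r-2$. The only point requiring care is that the extension $\tilde f$ is arbitrary and may well have many $r$-fold multiple points off $\Delta_N^{(k)}$, but the $\phi$-coordinate neutralises them: any such multiple point has $t>0$ and is killed by the vertex count, while multiple points with $t=0$ are controlled by $f$ itself.
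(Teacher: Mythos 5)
Your proof is correct and is precisely the standard Gromov--Blagojevi\'c--Frick--Ziegler constraint argument that the paper cites without reproducing: extend to all of $\Delta_N$, append a coordinate that vanishes exactly on the $k$-skeleton, and use the pigeonhole count $r(k+2)=N+2>N+1$ to rule out $r$ pairwise disjoint faces all of dimension at least $k+1$. The case split on the last coordinate $t$ and the passage to the carriers $\tau_i$ when $t=0$ are exactly as in the cited sources, so there is nothing to fix.
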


\begin{proof}[Proof of Theorem \ref{t:tvestr} modulo Theorem \ref{c:any}]
Theorem \ref{t:tvestr} holds for $d=1$ because then $N=r-2$, so $\Delta_N$ does not have $r$ non-empty pairwise disjoint faces.
Assume further that $d\ge2$.
Denote $k:=d-1-\Big\lceil\dfrac{d+2}{r+1}\Big\rceil$.
Then $N=(k+2)r-2$.
Since $d\ge2$ and $r\ge6$, we have $k\ge0$.
We have
$$
\dfrac{d+2}{r+1}=\dfrac{d+2-\frac{d+2}{r+1}}r \ge
\frac{k+3}r
\quad\Rightarrow\quad
d-1=k+\Big\lceil\dfrac{d+2}{r+1}\Big\rceil \ge k+\Big\lceil\dfrac{k+3}r\Big\rceil.
$$
Since $r$ is not a prime power, by Theorem \ref{c:any} there is an almost $r$-embedding
of the union of $k$-faces of $\Delta_N$ to $\R^{d-1}$.
Then by the Constraint Lemma \ref{p:redu-ge} there is an almost $r$-embedding $\Delta_N\to\R^d$.
\end{proof}

Denote by $\Sigma_r$ the permutation group of $r$ elements.
Let $\R^{d\times r}:=(\R^d)^r$ be the set of real $d\times r$-matrices.
The group $\Sigma_r$ acts on $\R^{d\times r}$ by permuting the columns.
Denote
$$
\diag\phantom{}_r=\diag\phantom{}_{r,d}:=\{(x,x,\ldots,x)\in \R^{d\times r}\ |\ x\in\R^d\}.
$$

\noindent
{\bf Theorem \ref{t:ozaydin-g}'.} (\"Ozaydin) {\it If $r$ is not a prime power and $\dim X=d(r-1)$,
then there is a $\Sigma_r$-equivariant map $X\to\R^{d\times r}-\diag\phantom{}_r$.}


\medskip
This is proved in \cite{Oz}, see R. Karasev's short proof in the survey \cite[\S3.2]{Sk16}.

The following result improves the \"Ozaydin Theorem \ref{t:ozaydin-g}'.

\begin{theorem}\label{t:ozaydin-g} If $r$ is not a prime power and $X$ is a complex with a free action of $\Sigma_r$, then there is a $\Sigma_r$-equivariant map $X\to \R^{2\times r}-\diag\phantom{}_r$.
\end{theorem}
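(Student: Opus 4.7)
The plan is as follows. First identify the target: the orthogonal splitting $\R^{2\times r}=\diag_r\oplus V$ with $V:=W_r\otimes\R^2$, where $W_r=\{(x_1,\ldots,x_r)\in\R^r:\sum x_i=0\}$ is the standard $(r-1)$-dimensional representation of $\Sigma_r$, shows that $\R^{2\times r}-\diag_r$ deformation retracts $\Sigma_r$-equivariantly onto the unit sphere $S(V)$ of dimension $2r-3$. It thus suffices to construct a $\Sigma_r$-equivariant map $X\to S(V)$, equivalently (since the $\Sigma_r$-action on $X$ is free) a nowhere-zero section of the real rank-$(2r-2)$ vector bundle $\xi:=X\times_{\Sigma_r}V\to X/\Sigma_r$. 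I will proceed by equivariant obstruction theory combined with Sylow detection.

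The crucial consequence of $r$ being a non-prime-power is that for every prime $p$, no Sylow $p$-subgroup $P\le\Sigma_r$ acts transitively on $\{1,\ldots,r\}$: orbit sizes are $p$-powers summing to $r$ and a single orbit would force $r=p^k$. Hence $P$ has at least two orbits, so $W_r^P\neq 0$, consequently $V^P\neq 0$, and $S(V)^P\neq\emptyset$. Sending $X$ to any chosen point of $S(V)^P$ is then a constant $P$-equivariant map $X\to S(V)$; in particular \emph{a} $P$-equivariant map exists, so every obstruction to a $P$-equivariant map $X\to S(V)$ vanishes.

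The obstructions to a $\Sigma_r$-equivariant section of $\xi$ live in the groups $H^{k+1}(X/\Sigma_r;\pi_k(S(V)))$ for $k\ge 2r-3$, and arise as pullbacks, via the classifying map $X/\Sigma_r\to B\Sigma_r$, of universal classes in $H^{k+1}(B\Sigma_r;\pi_k(S(V)))$. The primary class (at $k=2r-3$) is the equivariant Euler class $e^{\Sigma_r}(V)$; by classical \"Ozaydin it vanishes when $r$ is not a prime power (since $e(V)=e(W_r)^2=0$ in the relevant equivariant cohomology). For $k>2r-3$, the coefficient $\pi_k(S^{2r-3})$ is a finite abelian group (as $2r-3$ is odd, by Serre), so each universal class is torsion. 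Its $p$-primary part injects, via the standard transfer--restriction argument using $\gcd([\Sigma_r:P],p)=1$, into $H^{k+1}(BP;\pi_k(S(V)))$; but by the previous paragraph the restriction to $P$ is the obstruction to a $P$-equivariant map, which vanishes. Hence every universal obstruction is zero, so every obstruction on $X$ vanishes, and the desired map is built inductively cell by cell, then composed with the retraction $S(V)\simeq\R^{2\times r}-\diag_r$.

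The principal obstacle is the third step: extending \"Ozaydin's vanishing of the primary Euler-class obstruction to \emph{all} higher obstructions. The delicate point is verifying that a constant $P$-equivariant map really kills every $P$-equivariant obstruction class, not just the Euler-class analogue. This requires recognizing the obstructions as pullbacks of universal equivariant characteristic classes from $B\Sigma_r$ and handling the Bredon-cohomology setup carefully, separately for each prime $p$ dividing $|\Sigma_r|=r!$.
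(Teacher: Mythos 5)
Your proposal contains a genuine gap, precisely at the place you yourself flag as "the delicate point," and the paper takes an entirely different route around it.

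The gap is in the treatment of the higher obstructions ($k>2r-3$). You assert that they "arise as pullbacks, via the classifying map $X/\Sigma_r\to B\Sigma_r$, of universal classes in $H^{k+1}(B\Sigma_r;\pi_k(S(V)))$," and that "the restriction to $P$ is the obstruction to a $P$-equivariant map, which vanishes" because a constant $P$-map exists. Neither claim holds. Unlike the primary obstruction (the Euler class), the secondary and higher obstruction classes are not characteristic classes: they depend on the $\Sigma_r$-equivariant partial map already chosen on the $k$-skeleton and are not determined by the classifying map alone. Consequently, the restriction of such a class to $P$ is the obstruction to extending that \emph{particular} partial map $P$-equivariantly — not the obstruction to the existence of \emph{some} $P$-equivariant map. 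The constant map to a point of $S(V)^P$ is a different $P$-map with a different restriction to lower skeleta, so its existence does not force the restricted obstruction cocycle to be a coboundary. The transfer--restriction argument ($\gcd([\Sigma_r:P],p)=1$ makes restriction injective on $p$-torsion) is valid as far as it goes, but it only gives you $o=0$ if $\operatorname{res}^{\Sigma_r}_P(o)=0$, which is exactly what remains unproved. This is the same difficulty that confined \"Ozaydin's original theorem to the dimension $\dim X=d(r-1)$, where only the primary obstruction arises.

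The paper avoids obstruction theory entirely for the higher cells. It invokes Lemma~\ref{lemma:zero-degree} (from \cite{AK19}; also derivable from \cite[Lemma 3.9]{Ba93}): if there is a \emph{degree-zero} $G$-equivariant self-map of $S^n$, then every complex with a free $G$-action admits a $G$-equivariant map to $S^n$. The whole weight of the proof is then shifted to constructing such a degree-zero $\Sigma_r$-equivariant self-map of $S^{2r-3}_{\Sigma_r}$ (Lemma~\ref{l:zero-map}), which the paper does explicitly and geometrically: starting from the identity (degree $1$), one perturbs the map near the $\Sigma_r$-orbit of a suitably chosen point $c$ to shift the degree by $\pm\binom{r}{k}$; since $r$ is not a prime power, $\gcd_k\binom{r}{k}=1$ (Lucas), so an integer combination reaches degree $0$. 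Your Sylow-orbit observation — that a $p$-Sylow subgroup cannot act transitively on $\{1,\dots,r\}$, so $S(V)^P\neq\emptyset$ — and your Euler-class vanishing step are correct and are essentially the content of the classical \"Ozaydin primary-obstruction argument, but they do not supply the missing control over higher obstructions. If you want to pursue the obstruction-theoretic route, you should compare with \cite{BG17}, which the paper cites as obtaining weaker results by a harder technique along those lines.
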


\begin{remark}[Relations
to other papers]\label{c:motivkf}
Let $X$ be a complex with a free action of $\Sigma_r$,
Observe that if $\dim X<d(r-1)$, then the existence of an equivariant map $X\to\R^{d\times r}-\diag\phantom{}_r$ follows by general position.
The statements \cite[Theorem 5.1]{AK19}, \cite[Theorem 1.1]{AKu19} of other improvements of Theorem \ref{t:ozaydin-g}'  are obtained from Theorem \ref{t:ozaydin-g} replacing 2 by 1 and imposing stronger restrictions on $r$.\footnote{For a finite cyclic or dihedral group $G$, and a certain representation space $V$ of $G$, $G$-equivariant maps from the classifying space $EG$ to $V-0$ were constructed in \cite{BG17}.
Theorem \ref{t:ozaydin-g} should also be compared to \cite[Theorem 3.6 and the paragraph afterwards]{Ba93}.
That reference takes a group $G$ {\it from a certain class} and proves that there exists \emph{some} representation $W$ of $G$, for which there exist $G$-equivariant maps $X \to S(W)$ for certain $G$-spaces $X$.
However, $G=\Sigma_r$ does not belong to that class, and the $\Sigma_r$-space $S(W)$ described in
\cite[Theorem 3.6 and the paragraph afterwards]{Ba93} need not coincide with the $\Sigma_r$-space
$\R^{2\times r}-\diag\phantom{}_r$ given by Theorem \ref{t:ozaydin-g}.}

Our proof of Theorem \ref{t:ozaydin-g} is analogous to the argument in \cite{AK19, AKu19}: {\it Theorem \ref{t:ozaydin-g} follows from the known Lemma \ref{lemma:zero-degree} and the new Lemma \ref{l:zero-map} below} (see also the paragraph after Lemma \ref{l:zero-map}).
This is different from the \"Ozaydin idea \cite{Oz}
and from the short proof in \cite[\S3.2]{Sk16}.
So our argument gives a simple proof of the \"Ozaydin Theorem \ref{t:ozaydin-g}'.
\end{remark}

For a complex $K$ let $K^{\times r}_{\Delta}$ be the associated $r$-fold deleted product:
$$K^{\times r}_{\Delta}:= \bigcup \{ \sigma_1 \times \cdots \times \sigma_r
\ : \sigma_i \textrm{ a simplex of }K,\ \sigma_i \cap \sigma_j = \emptyset \mbox{ for every $i \neq j$} \}.$$
The group $\Sigma_r$ has a natural action on the set $K^{\times r}_{\Delta}$,
permuting the points in an $r$-tuple $(p_1,\ldots, p_r)$.
This action is evidently free and {\it PL}, i.e. compatible with some structure of a complex on
$K^{\times r}_{\Delta}$.


\medskip
\noindent
{\bf Theorem \ref{t:mmw}'.} (Mabillard-Wagner) {\it Assume that $K$ is a $s(r-1)$-complex and $s\ge3$.
There exist an almost $r$-embedding $K\to\R^{sr}$ if and if there is a $\Sigma_r$-equivariant map
$K^{\times r}_{\Delta}\to\R^{sr\times r}-\diag\phantom{}_r$.}



\begin{theorem}[Mabillard-Wagner]\label{t:mmw}
Assume that $K$ is a $k$-complex and $rd\ge(r+1)k+3$.
There exists an almost $r$-embedding $f:K\to\R^d$ if and only if there exists a $\Sigma_r$-equivariant map
$K^{\times r}_{\Delta}\to \R^{d\times r}-\diag\phantom{}_r$.
\end{theorem}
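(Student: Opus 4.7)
The plan is to establish both implications separately. The easy direction (``only if'') is formal: if $f:K\to\R^d$ is an almost $r$-embedding, then the product
$$\tilde f(x_1,\ldots,x_r):=(f(x_1),\ldots,f(x_r))$$
is tautologically $\Sigma_r$-equivariant and lands in $\R^{d\times r}-\delta_r$ on $K^{\times r}_\Delta$, because a coincidence $f(x_1)=\cdots=f(x_r)$ at a point $(x_1,\ldots,x_r)\in K^{\times r}_\Delta$ with $x_i\in\sigma_i$ and the $\sigma_i$ pairwise disjoint would contradict $f\sigma_1\cap\cdots\cap f\sigma_r=\emptyset$.

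For the hard direction (``if''), the plan is to start from any PL map $f_0:K\to\R^d$ in general position and deform it to an almost $r$-embedding, using the given equivariant map $\phi:K^{\times r}_\Delta\to\R^{d\times r}-\delta_r$ as a guide. The product $\tilde f_0$ restricted to $K^{\times r}_\Delta$ is already $\Sigma_r$-equivariant into $\R^{d\times r}$, and its failure to avoid $\delta_r$ is concentrated on the coincidence set $S(f_0):=\tilde f_0^{-1}(\delta_r)\cap K^{\times r}_\Delta$. Since $\R^{d\times r}$ is contractible, $\tilde f_0|_{K^{\times r}_\Delta}$ and $\phi$ are equivariantly homotopic; the core task is to realize this \emph{equivariant} homotopy by an actual PL deformation of $f_0$ whose induced product on $K^{\times r}_\Delta$ matches the homotopy. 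The $r$-fold Whitney trick of Mabillard--Wagner accomplishes this under the assumption $rd\ge(r+1)k+3$: one pairs the $\Sigma_r$-orbits of the (generically low-dimensional) coincidence locus according to signs dictated by the equivariant homotopy class, and cancels each matched pair by an equivariant finger move supported along an embedded ``Whitney $r$-tuple'' of disks. Iterating exhausts $S(f)$ and produces the desired almost $r$-embedding.

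The main obstacle is the existence and control of these Whitney $r$-tuples: they must be embedded pairwise disjointly, must meet $f_0(K)$ only along prescribed boundary arcs joining the cancelling orbits, and the whole family must be $\Sigma_r$-equivariant. General position guarantees that all the relevant intersection dimensions are negative precisely when $rd\ge(r+1)k+3$; this dimension count is the technical core of \cite{MW16} and is what forces the metastable hypothesis of the theorem. The extension from manifold $K$ (the original Mabillard--Wagner setting) to arbitrary simplicial $k$-complexes, which is what we need here, is the content of \cite{Sk17, Sk17o} and is carried out by working inside a regular neighborhood of $K$ in $\R^d$ and controlling the local deleted-product structure near non-manifold singularities of $K$.
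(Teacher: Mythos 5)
The paper does not prove Theorem~\ref{t:mmw}; it is quoted from \cite{MW16,Sk17,Sk17o}, so there is no in-paper argument to compare against. Your sketch correctly names the ingredients of the cited proofs --- the deleted-product construction for the only-if direction, the $r$-fold Whitney trick under the metastable hypothesis $rd\ge(r+1)k+3$ for the if direction, and the extension from the Mabillard--Wagner setting to arbitrary $k$-complexes carried out in \cite{Sk17,Sk17o} --- and is consistent with those references.
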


See the proofs in \cite{MW15},
\cite[\S3]{Sk16}, and in \cite{MW16, MW16', Sk17}, respectively.\aronly{\footnote{For a criticism of the proof of Theorem \ref{t:mmw} in \cite{MW16, MW16'} see \cite[\S5]{Sk17}; this footnote is not present in the published version of this paper.}}



\begin{proof}[Proof of Theorem \ref{c:any} modulo Theorems \ref{t:ozaydin-g} and \ref{t:mmw}]
Let $K$ be any $k$-complex and $d:=k+\Big\lceil\dfrac{k+3}r\Big\rceil$.
If $d=1$, then $k=0$, so Theorem \ref{c:any} is obvious.
Now assume that $d\ge2$.
Since $r$ is not a prime power, by Theorem \ref{t:ozaydin-g} there is a $\Sigma_r$-equivariant map
$K^{\times r}_\Delta\to\R^{2\times r}-\diag\phantom{}_r$.
The composition of this map with the $r$-th power of the inclusion $\R^2\to\R^d$ gives
a $\Sigma_r$-equivariant map $K^{\times r}_\Delta\to \R^{d\times r}-\diag\phantom{}_r$.
We have $rd\ge(r+1)k+3$.
Hence by Theorem \ref{t:mmw}  there is an almost $r$-embedding $K\to\R^d$.
\end{proof}



\section{Proof of Theorem \ref{t:ozaydin-g}}

\begin{lemma}\label{lemma:zero-degree} Let $G$ be a finite group acting on $S^n$.
If there exists a degree zero $G$-equivariant self-map of $S^n$,
then any complex $X$ with a free action of $G$ has a $G$-equivariant map $X\to S^n$.
\end{lemma}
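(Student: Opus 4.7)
The plan is to build the equivariant map $f\colon X\to S^n$ skeleton by skeleton, exploiting that composing with the degree-zero equivariant self-map $\phi\colon S^n\to S^n$ kills every obstruction to extension across the next layer of cells. Since $X$ is a subcomplex of some simplex, it is finite-dimensional, so the induction terminates in finitely many steps.

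First I would construct an equivariant map $f_n\colon X^{(n)}\to S^n$ on the $n$-skeleton. Starting from $f_0\colon X^{(0)}\to S^n$ (defined by picking one vertex per $G$-orbit, sending it to an arbitrary point of $S^n$, and using freeness of the action on $X^{(0)}$ to extend equivariantly), one inductively extends over one representative $(k+1)$-cell per $G$-orbit for $k=0,1,\ldots,n-1$, since the obstruction class in $\pi_k(S^n)$ vanishes for $k<n$, and propagates by equivariance.

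Next, assuming an equivariant map $f_k\colon X^{(k)}\to S^n$ exists with $k\ge n$, I would replace it by $\phi\circ f_k$ and extend over the $(k+1)$-cells. For a $(k+1)$-cell $e$ with attaching map $\alpha_e\colon S^k\to X^{(k)}$, the obstruction is
\[
[\phi\circ f_k\circ \alpha_e]=\phi_{*}[f_k\circ \alpha_e]\in\pi_k(S^n).
\]
Since $\deg\phi=0$ (and $n\ge 1$), Hopf's theorem says $\phi$ is non-equivariantly nullhomotopic, so $\phi_{*}=0$ on every homotopy group of $S^n$ and the obstruction vanishes. Extending over one cell per $G$-orbit and propagating by equivariance yields $f_{k+1}$; iterating up to $k=\dim X$ produces the desired map.

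The main (and minor) obstacle is that composing with $\phi$ at each stage alters the map on lower skeleta, so one does not obtain a coherent nested family of extensions. This is harmless, however, since the statement asks only for existence of some equivariant map on all of $X$, and it suffices to output the top-level map produced by the iteration. The remaining technicalities—extending a cellular map equivariantly given vanishing of the cellular obstructions—are routine once freeness of the $G$-action is used to select representatives of cell orbits.
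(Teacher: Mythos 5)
Your proposal is correct and follows essentially the same route as the proof the paper points to in \cite[\S5]{AK19}: build the equivariant map by obstruction theory one skeleton at a time, using freeness to extend over one cell per $G$-orbit, and post-compose with the degree-zero equivariant self-map $\phi$ before each extension so that (by the Hopf theorem) $\phi_*=0$ on $\pi_k(S^n)$ kills the obstruction cochain. Your remark that the resulting extensions are not nested but that this is irrelevant for mere existence is also the right observation.
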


See the historical remarks and a proof in \cite[\S5]{AK19}.
In particular, this lemma follows from \cite[Lemma 3.9]{Ba93}; see \cite[\S5]{AK19} for a simpler
direct proof.\aronly{\footnote{Note that to read the direct proof in \cite[\S5]{AK19} is simpler than to find the notation required for the statement \cite[Lemma 3.9]{Ba93} and deduce Lemma \ref{lemma:zero-degree} from that statement; this footnote is not present in the published version of this paper.}}

Denote by $S^{d(r-1)-1}_{\Sigma_r}\subset\R^{d\times r}-\delta_r$ the set formed by all $d\times r$-matrices in which the sum of the elements in each row is zero, and the sum of the squares of all the matrix elements is 1.
This set is invariant under the action of $\Sigma_r$.
This set is homeomorphic to $S^{d(r-1)-1}$.

\begin{lemma}\label{l:zero-map} If $r$ is not a prime power, then there is a degree zero $\Sigma_r$-equivariant self-map of $S:=S^{2r-3}_{\Sigma_r}=S^{2(r-1)-1}_{\Sigma_r}$.
\end{lemma}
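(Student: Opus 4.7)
The plan is to realize $S^{2r-3}_{\Sigma_r}$ as the unit sphere $S(W_\C)$ in $W_\C := W \otimes_\R \C$, where $W = \{x \in \R^r : x_1 + \cdots + x_r = 0\}$ is the standard real $(r-1)$-dimensional $\Sigma_r$-representation. The $\Sigma_r$-action is $\C$-linear on $W_\C$, which provides an extra free $S^1$-action (by complex scalars) commuting with $\Sigma_r$. The overall strategy will parallel that of \cite{AK19, AKu19}, which treat the analogous result for $S(W)$ itself (i.e.\ with $1$ in place of $2$); the doubled dimension here supplies the additional flexibility that allows us to drop the stronger arithmetic restrictions on $r$ imposed in those papers.

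First I would analyse the orbit-type stratification of $S(W_\C)$. Every isotropy of a point of $W_\C - 0$ is a proper Young subgroup $\Sigma_\lambda \subsetneq \Sigma_r$, with $\dim_\R W^{\Sigma_\lambda} = \ell(\lambda) - 1$ and hence $\dim_\C (W^{\Sigma_\lambda})_\C = \ell(\lambda) - 1 \ge 1$. The key consequence of $r$ not being a prime power is that for every prime $p$ dividing $|\Sigma_r|$, the $p$-Sylow subgroup $P_p \le \Sigma_r$ cannot act transitively on $\{1,\dots,r\}$---otherwise orbit--stabilizer would force $r$ to be a power of $p$---so $W^{P_p} \ne 0$ and the fixed sphere $S((W^{P_p})_\C)$ is a nonempty odd-dimensional sphere of positive dimension.

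Next I would construct the required self-map $f \colon S(W_\C) \to S(W_\C)$ by equivariant obstruction theory, working inductively along a $\Sigma_r$-CW structure of $S(W_\C)$ compatible with the orbit-type stratification. Starting from the smallest (deepest) fixed spheres $S((W^H)_\C)$ and extending outward, I would prescribe each restriction $f|_{S((W^H)_\C)}$ to be an $N_{\Sigma_r}(H)/H$-equivariant self-map of a prescribed degree. The complex $S^1$-symmetry on each such sphere supplies enough room to arrange the local degree to be divisible by any prescribed prime. The extensions to higher-dimensional cells are then governed by Bredon cohomology obstructions which reduce, in this setting, to Burnside-ring calculations of the kind already used in \cite{AK19, AKu19}.

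The hard part is the global assembly and the final degree computation. A Smith-theoretic congruence relates $\deg f$ modulo $p$ to the degree of $f|_{S((W^{P_p})_\C)}$ for each prime $p$ dividing $|\Sigma_r|$. If $r$ were a prime power $p^k$ the transitive cyclic subgroup $C_{p^k} \le \Sigma_r$ would act freely on $S(W_\C)$ and the Borsuk--Ulam-type argument would force $\deg f \equiv 1 \pmod{p}$, excluding degree $0$; but under our hypothesis no such obstruction appears. The main technical step is therefore to check that the local degrees on the various Sylow fixed spheres can be chosen compatibly across all primes and all isotropy subgroups so that the final $\Sigma_r$-equivariant assembly has total degree exactly zero, which is precisely where the non-prime-power hypothesis enters essentially and where the Burnside-ring argument of \cite{AK19, AKu19} has to be adapted to the complex setting.
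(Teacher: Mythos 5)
Your proposal correctly identifies the representation-theoretic setting ($S^{2r-3}_{\Sigma_r} \cong S(W_\C)$ where $W$ is the standard $(r-1)$-dimensional $\Sigma_r$-representation), the Young-subgroup isotropy stratification, and the role of the hypothesis that $r$ is not a prime power via Sylow subgroups failing to act transitively on $\{1,\ldots,r\}$. But the route you propose --- equivariant obstruction theory, Bredon cohomology, Burnside-ring computations, Smith congruences --- is essentially what the paper calls ``the \"Ozaydin idea'' and explicitly distinguishes from its own method. The paper's proof is elementary and constructive: start from the identity (degree $1$) and show, for each $k=1,\dots,r-1$, that any suitable (``special'') equivariant self-map can be modified to change its degree by $+\binom{r}{k}$ or by $-\binom{r}{k}$. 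This is done by pushing the orbit of a particular $2\times r$ matrix $c$ (first row built from $k$ copies of $k-r$ and $r-k$ copies of $k$, second row zero) through the origin; the orbit has exactly $\binom{r}{k}$ points, which is where the binomial coefficient comes from, and a reflection in the hyperplane orthogonal to the matrix $c_1$ with the same vector placed in the \emph{second} row produces the other sign. Combined with the classical fact that $\gcd\bigl\{\binom{r}{k}: 1\le k\le r-1\bigr\}=1$ when $r$ is not a prime power, this yields a degree-$0$ special map.

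As written, your plan has a genuine gap. You never articulate the mechanism by which the non-prime-power hypothesis produces degree zero: the appeal to ``Burnside-ring calculations of the kind already used in \cite{AK19, AKu19}'' substitutes for the content that actually has to be supplied, namely the $\pm\binom{r}{k}$ degree-shifts and the gcd argument. The claim that the extra $S^1$-symmetry ``supplies enough room to arrange the local degree to be divisible by any prescribed prime'' is unsupported and does not reflect how the doubled dimension is actually used; in the paper, the availability of a second row is what permits the reflection furnishing both signs in front of $\binom{r}{k}$, and that is precisely why the restrictions of \cite{AK19, AKu19} (who work with the single-row sphere $S^{r-2}_{\Sigma_r}$) can be dropped here. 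Finally, the ``main technical step'' you defer --- choosing local degrees compatibly across all Sylow fixed spheres and assembling them equivariantly --- is exactly where the argument would have to live, so the proposal is a sketch of a plausible alternative route rather than a proof.
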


Lemma \ref{l:zero-map} is analogous to \cite[Theorem 4.2]{AK19} and \cite[Theorem 1.4.c,d]{AKu19}.
Those theorems are stated in a different language, but can be obtained from Lemma \ref{l:zero-map} by replacing $2r-3$ by $r-2$, and adding stronger restrictions on $r$.
The proofs follow the same plan via Proposition \ref{p:spech}
(although this proposition is not explicitly stated in \cite{AK19, AKu19}).
The binomial coefficients appear in the same way.
However, the procedure of obtaining the prescribed sign in front of the binomial coefficient
requires additional work.
The procedure is easier in \cite{AK19}, is intermediate here, and is more complicated in \cite{AKu19}
(the proof of \cite{AKu19} also uses additional ideas).

\begin{proof}[Proof of Lemma \ref{l:zero-map}]
Since $r$ is not a prime power, the greatest common divisor of the binomial coefficients
$\binom{r}{k}$, $k=1,\ldots,r-1$ is $1$ \cite{lucas1878}.
Hence $-1$ is an  integer linear  combination of the binomial coefficients.
Denote by $C\subset S$
the set of $(2\times r)$-matrices whose second row is zero, and the entries of the first row involve only two numbers.
A {\it special} map is a $\Sigma_r$-equivariant self-map $f$ of $S$
which is a local homeomorphism in some neighborhood of $C$.
The identity map of $S$
is a special map of degree 1.
Thus the lemma is implied by the following assertion

{\it For any $r$, any $k=1,\ldots,r-1$, and any special map $f$
there are special maps $f_+,f_-$ such that $\deg f_\pm=\deg f\pm\binom{r}{k}$.}

This is implied by the following proposition.
\end{proof}

\begin{proposition}\label{p:spech} For any $r$, any $k=1,\ldots,r-1$, and any special map $f$ there are a point $c\in C$ and $\Sigma_r$-equivariant homotopies $h_+,h_-:S\times I\to \R^{2r-2}_{\Sigma_r}$ such that

$(1_\pm)$ $h_{\pm,0}=f$, and $h_{\pm,1}:S\to S$ is special,


$(2_\pm)$ $h$ is a local homeomorphism over a neighborhood of 0,   and
\[
\deg h_{\pm,1}-\deg f = \deg_0 h = \pm \binom{r}{k}\sign\phantom{}_c f.
\]
Here $\deg_0 h$ is the degree of $h$ over 0, and $\sign_c f\in\{+1,-1\}$ is the sign of the preimage $c$ of $f(c)$ under the map $f$ (since $c\in C$ and $f$ is special, $f$ is a homeomorphism in a neighborhood of $c$; we have $\sign_c f=\sign\det df(c)$ if $f$ is smooth and $\det df(c)\ne0$).
\end{proposition}

Informally, we construct $h_{\pm}$ by `pushing' a certain point $c\in C$ and its orbit towards the origin in
$\R^{2\times r}$.
See \cite[Figures 1 and 2]{AK19}.
For $h_+$ such a pushing is `twisted along the reflection with respect to a certain hyperplane'.

\begin{proof}[Proof of Proposition \ref{p:spech}]
{\it Definitions of $c,G,U$ and $\rho$.}
The objects we construct depend on $r,k$ but we suppress $r,k$ from their notation.
Define the vector
$$
M:=(\underbrace{k-r,\ldots,k-r}_{k},\underbrace{k,\ldots,k}_{r-k}).
$$
Define the $(2\times r)$-matrix
$c:=\left(\begin{matrix} M/|M| \\0\end{matrix}\right)\in C$.
The orbit $\Sigma_rc$ of $c$ contains $\binom{r}{k}$ points.
The stabilizer of $c$ is $G:=\Sigma_k\times\Sigma_{r-k}\subset\Sigma_r$.

The standard metric on the sphere is $\Sigma_r$-invariant.
Hence there is a small ball $U$ centered at $c$ such that
$U\cap\sigma U=\emptyset$ for any $\sigma\in\Sigma_r-G$, and $\sigma U = U$ for any $\sigma \in G$.
Take a smooth function $\rho':S\to [0,1]$ which is zero outside $U$, and is one in a neighborhood of $c$.
Define a smooth function $\rho :S\to [0,1]$ by $\rho(x):=\frac{1}{r!}\sum_{\sigma\in\Sigma_r} \rho'(\sigma x)$.
Then $\rho$ is zero outside $\Sigma_rU$, is one in a neighborhood of $\Sigma_rc$, and is invariant with respect to the $\Sigma_r$-action.

\smallskip
{\it Construction of $h_-$.}
For $t\in[0,1/2]$ define
\[
h_-(x,t)=h_{-,t}(x) := \begin{cases} f(x) & x\not\in\Sigma_rU\\
f(x) - 4t\rho(x)f(\sigma c) & x\in\sigma U,\ \sigma\in\Sigma_r.
\end{cases}
\]
Clearly, $h_-$ is well-defined,  is continuous, and is $\Sigma_r$-equivariant.


In this paragraph we  prove that
$$h_-^{-1}(0)=(\Sigma_rc)\times1/4$$
for the constructed homotopy $h_-:S\times[0,1/2]\to \R^{2r-2}_{\Sigma_r}$.
If $t\in[0,1/2]$ and $h_{-,t}(x)=0$, then $x\in\Sigma_rU$.
Since $f$ is a local homeomorphism and $|f(x)|=1$, we have $4t\rho(x)=1$ and $x=\sigma c$ for some $\sigma\in\Sigma_r$.
Then $t=1/4$.

Since $h_-^{-1}(0)=(\Sigma_rc)\times1/4$, we have $0\not\in h_{-,1/2}(S)$.
Hence there is a homotopy\footnote{E.g. take $h_{-,t}(x) = \frac{h_{-,1/2}(x)}{2-2t+(2t-1)|h_{-,1/2}(x)|}$.} $h_-:S\times[1/2,1]\to \R^{2r-2}_{\Sigma_r}-\{0\}$ between $h_{-,1/2}$ and a map
$h_{-,1}$ defined by $h_{-,1}(x):=\frac{h_{-,1/2}(x)}{|h_{-,1/2}(x)|}$.
Then $h_-^{-1}(0)=(\Sigma_rc)\times1/4$
for the constructed homotopy $h_-:S\times[0,1]\to \R^{2r-2}_{\Sigma_r}$.

\smallskip
{\it Proof of $(1_-)$.}
Clearly, $h_{-,0}=f$.
Since $f$ is a local homeomorphism in some neighborhood of $C$,
the map $h_{-,1}$ is such in a neighborhood of $C-\Sigma_r c$.
In a neighborhood of $\sigma c$ the map $h_{-,1}$ is a shift by $-2f(\sigma c)$ composed with the central projection back to the sphere.
This is clearly a homeomorphism in a neighborhood of $\sigma c$.

\smallskip
{\it Proof of ($2_-$).}
Take a sufficiently small neighborhood $W$ of $c$ such that $\rho(W)=1$
and $f(W)$ is contained in the hemisphere centered at $f(c)$.
Then $h_{-,t}(x) = f(x)-4t f(c)$ for any $x\in W$ and $t\in[0,1/2]$.
Let $f(c)^\perp$ be the hyperplane tangent to $S$ at $f(c)$.
Let $\pi : \R^{2r-2}_{\Sigma_r}\to f(c)^\perp$ be the orthogonal projection.
Let $\left<f(c)\right>$ be the line passing through $f(c)$ and the origin (and so orthogonal to $f(c)^\perp$).
Then $N := \pi^{-1}(\pi(f(W)))$ is a neighborhood of this line.
Observe that $\pi|_{f(W)}$ is a homeomorphism.
Denote by $\pi^{-1}$ its inverse.
Define a map
\[
\tau :  N \to N\quad\text{by}\quad \tau(y) := y + \pi(y) - \pi^{-1}(\pi(y)).
\]
Then $\tau$ shifts every line in $N$ parallel to $\left<f(c)\right>$ by the vector
$\pi(y) - \pi^{-1}(\pi(y))\in \left<f(c)\right>$.
Hence $\tau$ is a self-homeomorphism of $N$ preserving the orientation.
The origin is fixed under $\tau$ because $\tau(0) = 0 + \pi(0) - \pi^{-1}(\pi(0)) = c - c = 0$.
For $t\in[0,1/2]$ we have
\[
\tau(h_{-,t}(x)) = \tau(f(x) - 4t f(c)) = f(x) - 4tf(c) + \pi(f(x)) - \pi^{-1}(\pi(f(x))) = \pi(f(x)) - 4t f(c).
\]
Hence for the decomposition $\R^{2r-2}_{\Sigma_r}=\left<f(c)\right>\times f(c)^\perp$ the
map $\tau\circ h_-|_{W\times[0,1/2]}$ is the Cartesian product of the maps
$$\pi\circ f : W\to f(c)^\perp\quad\text{and}\quad  a : [0,1/2]\to \left<f(c)\right>,\quad\text{where}\quad a(t):=-4tf(c).$$
Both $\pi\circ f$ and $a$ are embeddings.
Hence $\tau\circ h_-|_{W\times[0,1/2]}$ is a homeomorphisms in a neighborhood of $(c,1/4)$.
Hence $h_-|_{W\times[0,1/2]}$ and $h_-|_{W\times[0,1]}$ are also such.
Since $h_-$ is $\Sigma_r$-equivariant and $h_-^{-1}(0)=(\Sigma_rc)\times1/4$, we see that
$h_-$ is a local homeomorphism over a neighborhood of 0.

Denote $D := \partial(S\times[0,1]) = S\times\{0,1\}$.
Then
\[
\deg h_{-,1}-\deg f = \deg h_{-,1}- \deg h_{-,0} = \deg(h_-|_D:D \to S) = \deg_0 h_-.
\]
We also have
\begin{multline*}
\frac{\deg_0 h_-}{\binom{r}{k}} =  \sign\phantom{}_{(c,1/4)} h_- =
\sign\phantom{}_0 \tau^{-1} \cdot \sign\phantom{}_{(c,1/4)}(\tau\circ h_-) =
\sign\phantom{}_{(c,1/4)}(\tau\circ h_-) = \\
= \sign\phantom{}_{1/4} a\cdot \sign\phantom{}_c (\pi\circ f) =
- \sign\phantom{}_{f(c)} \pi\cdot \sign\phantom{}_c f = - \sign\phantom{}_c f.
\end{multline*}


\smallskip
{\it Construction of $h_+$.}
Define the $(2\times r)$-matrix
$c_1:=\left(\begin{matrix} 0\\ M/|M| \end{matrix}\right)\in S$.
Take the hyperplane $c_1^\perp\subset\R^{2r-2}_{\Sigma_r}$ orthogonal to $c_1$ and passing through the origin.
Then $c\in c_1^\perp$.
We may assume that $V := U\cap\rho^{-1}[1/3,1]$ is a ball by assuming that $\rho$ is radially symmetric in $U$.
Let $q:V\to V$ [$g$] be the restriction to $V$ of the reflection with respect to the hyperplane $c_1^\perp$.
Then $q$ is $G$-equivariant, $\sign_cq=-1$ and $q^{-1}(c)=c$.

Define a
$G$-equivariant map $\phi' : U\times\{0\} \cup (\partial U\cup V)\times[0,1] \to U$

$\bullet$ on $U\times \{0\}$ as the natural homeomorphism;

$\bullet$ on $\partial U\times[0,1]$ as the composition of the projection and the inclusion $\partial U\to U$;

$\bullet$ on $V\times[1/3,1]$ as the composition of the projection, $q$, and the inclusion $V\to U$;

$\bullet$ on $V\times[0,1/3]$ as a $G$-equivariant homotopy between the identity map $\phi'_0$ and $q=\phi'_{1/3}$.


By the Borsuk homotopy extension theorem \cite[\S5.5]{FF89} $\phi'$ extends to a homotopy
$\psi:U\times[0,1]\to U$.
Define a homotopy $\phi : U\times[0,1]\to U$ by considering the average of $\psi$ with respect to $G$:
\[
\phi(x,t) = \frac{1}{|G|}\sum_{g\in G} g \psi(g^{-1}x,t) \in U.
\]
We have $\phi=\phi'=\psi$ on $U\times\{0\}\cup(\partial U\cup V)\times[0,1]$ because $\phi'$ is $G$-equivariant.
The homotopy $\phi$ is $G$-equivariant, since for any $m\in G$ from the linearity of the action of $G$ on $U$ one obtains
\[
m\phi(x,t) = \frac{1}{|G|}\sum_{g\in G} mg \psi(g^{-1}x,t) = \frac{1}{|G|}\sum_{k=mg\in G} k \psi(k^{-1} m x,t) = \phi(mx,t).
\]
Extend $\phi$ to $\Sigma_rU\times[0,1]$ in a $\Sigma_r$-equivariant way.
Define for $t\in[0,1/2]$
\[
h_+(x,t)=h_{+,t}(x) := \begin{cases} f(x) & x\not\in\Sigma_rU\\
f(\phi(x,2t)) - 4t\rho(x)f(\sigma c) & x\in\sigma U,\ \sigma\in\Sigma_r.
\end{cases}
\]
Clearly, $h_+$ is well-defined (since $\phi$ is $G$-equivariant), is continuous, and is $\Sigma_r$-equivariant.

In this paragraph we prove that
\[
h_+^{-1}(0)=(\Sigma_rc)\times1/4
\]
for the constructed homotopy $h_+:S\times[0,1/2]\to \R^{2r-2}_{\Sigma_r}$.
If $h_t(x)=0$ and $t\in[0,1/2]$, then $x\in\Sigma_rU$.
Since $f$ is a local homeomorphism and $|f(x)|=1$, we have $4t\rho(x)=1$ and $\phi(x,2t)=\sigma c$ for some $\sigma\in\Sigma_r$.
Therefore $\rho(x)\ge\frac14$, $t\ge 1/4$, and $x\in\sigma V$.
Hence $\phi(x,2t)=\sigma q(\sigma^{-1}x)$.
Since $q^{-1}(c)=c$, we have $x=\sigma c$, $\rho(x) = 1$, and $t = 1/4$.

Then analogously to the construction of $h_-$ we extend $h_+$ to $S\times[0,1]$, and check the properties $(1_+)$ and $(2_+)$.
In the proof of $(2_+)$ we have $\deg_0 h_+ = -{r\choose k}\sign_c q\sign_c f = {r\choose k}\sign_c f$
Here $\sign_cq$ appears because for $t\ge 1/6$ and $x\in V$ we have $\phi(x, 2t)=q(x)$.
\end{proof}


{\it In this list books, surveys and expository papers are marked by stars}

\end{document}